\documentclass{article}
\usepackage{amsmath}
\usepackage{amsfonts}
\usepackage{amsthm}
\usepackage{latexsym}
\usepackage{amssymb}
\usepackage{verbatim}
\usepackage{enumerate}
\font\logic=msam10 at 10pt

\newcommand{\restrict}{\mbox{\logic\char'026}}

\newcommand{\cP}{\mathcal{P}}

\newcommand{\cX}{\mathcal{X}}
\newcommand{\cL}{\mathcal{L}}

\newtheorem{thrm}{Theorem}[section]

\newtheorem{prop}[thrm]{Proposition}

\newtheoremstyle{hdefinition}%
  {\topsep}%
  {\topsep}%
  {\upshape}
  {}%
  {\bfseries}%
  {.}
  { }%
  {\thmnumber{#2 }\thmname{#1}\thmnote{ \rm(#3)}}%

\newtheoremstyle{hclaim}%
  {\topsep}%
  {\topsep}%
  {\itshape}%
  {}%
  {\bfseries}%
  {.}
  { }%
  {\thmname{#1}\thmnote{ \rm#3}}%

\newtheoremstyle{hnotation}%
  {\topsep}%
  {\topsep}%
  {\upshape}%
  {}%
  {\bfseries}%
  {.}
  { }%
  {\thmname{#1}\thmnote{ \rm#3}}%

\theoremstyle{hclaim}
\newtheorem*{claim*}{Claim}

\theoremstyle{hdefinition}
\newtheorem{df}[thrm]{Definition}

\newtheorem{ques}[thrm]{Question}

\theoremstyle{hclaim}

\theoremstyle{hnotation}

\newcommand{\cA}{\mathcal{A}}
\newcommand{\cF}{\mathcal{F}}

\begin{document}

\title{An extendible structure with a rigid elementary extension}


\author{Paul B. Larson\thanks{Supported in part by NSF Grant
  DMS-1201494.}\\
 Miami University\\
Oxford, Ohio USA\\
\and Saharon Shelah\thanks{Research partially supported by NSF grant no: 1101597, and by the
European Research Council grant 338821. Paper No. 1130 on Shelah's list. }\\
Hebrew University of Jerusalem\\Rutgers University}

\maketitle

A countable structure is said to be \emph{extendible} if it is $\cL_{\infty, \aleph_{0}}$-elementarily equivalent to an uncountable structure.
A theorem of Su Gao says that a countable structure is extendible if and only if its automorphism group is not closed in the space of injections from the domain of the structure to itself. In particular, an extendible structure has infinitely many automorphisms. In this note we give an example of an extendible countable structure ($M_{1}$ below) with a rigid elementary extension ($M_{2}$).

A set $\cA \subseteq \cP(\omega)$ is said to be \emph{independent} if for all disjoint finite $\cF_{0}, \cF_{1} \subseteq \cA$,
\[\bigcap \cF_{0} \cap \bigcap\{ \omega \setminus A : A \in \cF_{1}\}\] is infinite. A natural recursive argument builds a tree $T \subseteq 2^{<\omega}$ whose infinite paths correspond to an independent family of cardinality $2^{\aleph_{0}}$. Splitting this family into countably many pieces of cardinality $2^{\aleph_{0}}$, and then modifying the members of each piece accordingly, one can produce an independent family $\cA$ with the additional property that for all finite disjoint $u, v \subseteq \omega$ there are $2^{\aleph_{0}}$ many $A \in \cA$ with
$A \cap (u \cup v) = u$. Let call such a family an \emph{improved independent} set.

\begin{df}\label{gisdef} We define a \emph{good independent sequence} to be a set
\[\bar{A} = \langle A_{\alpha, n} : \alpha \leq 2^{\aleph_{0}}, n < \omega \rangle\] such that
\begin{enumerate}
\item $\{ A_{\alpha, n} : \alpha \leq 2^{\aleph_{0}}, n < \omega \}$ is an independent family;
\item the function $(\alpha, n) \mapsto A_{\alpha, n}$ is injective;
\item for all $k, m,n < \omega$ and all $u_{\ell} \subseteq m$ $(\ell < n)$, there exist $2^{\aleph_{0}}$ many ordinals $\alpha$ such that
\begin{enumerate}
\item $\alpha = k \mod \omega$,
\item\label{goodc} $\forall \ell < n$ $A_{\alpha, \ell} \cap m = u_{\ell}$,
\end{enumerate}
\item\label{dshake} for all $i < j < \omega$ there exist an  $n \in \omega$ such that $|A_{2^{\aleph_{0}}, n} \cap \{ i,j\}| = 1$.
\end{enumerate}
\end{df}

Given an improved independent set $\cA$, one can easily find a good independent sequence whose members are in $\cA$.  We let $\bar{A}$ be one such family.



We let $\tau$ be the vocabulary consisting of unary predicates $P$ and $Q$, and binary predicates $R_{n}$ $(n \in \omega)$.
We let $N_{2}$ be the following $\tau$-structure.

\begin{itemize}
\item  The set of elements is of $N$ is
$\{ b_{i} : i < \omega \} \cup \{ c_{\alpha} : \alpha \leq 2^{\aleph_{0}}\}$,  with no repetition.
\item $Q^{N_{2}} = \{ b_{i} : i < \omega \}$
\item $P^{N_{2}} = \{ c_{\alpha} : \alpha \leq 2^{\aleph_{0}}\}$
\item $R_{n}^{N_{2}} \subseteq Q^{N_{2}} \times P^{N_{2}}$ is defined as follows:
\begin{itemize}
\item $(b_{i}, c_{2^{\aleph_{0}}}) \in R_{n}^{N_{2}}$ if and only if $i \in A_{2^{\aleph_{0}},n}$;
\item if $\alpha < 2^{\aleph_{0}}$ and $\alpha = m \mod \omega$, then $(b_{i}, c_{\alpha}) \in R_{n}^{N_{2}}$ if and only if $n \leq m$ and $i \in A_{\alpha, n}$.
\end{itemize}
\end{itemize}
Let $N_{1}$ be the restriction of $N_{2}$ to
\[\{ b_{i} : i < \omega \} \cup \{ c_{\alpha} : \alpha < 2^{\aleph_{0}}\}.\]

Let $\cX\prec (H((2^{\aleph_{0}})^{+}),\in)$ be countable, with $\bar{A} \in \mathcal{B}$.
We let
\begin{itemize}
\item $M_{1}$ be the restriction of $N_{2}$ to
$\{ b_{i} : i < \omega \} \cup \{ c_{\alpha} : \alpha \in 2^{\aleph_{0}} \cap \cX\}$;
\item $M_{2}$ be the restriction of $N_{2}$ to
$\{ b_{i} : i < \omega \} \cup \{ c_{\alpha} : \alpha \in (2^{\aleph_{0}} + 1) \cap \cX\}$.
\end{itemize}


For each $m \in \omega$, let $\tau_{m} = \{ P, Q, R_{n} : n \leq m\}$.
Proposition \ref{claimone} shows that $M_{1}$ is extendible, and that $M_{1}$ is elementary in $M_{2}$. Proposition \ref{claim2} shows that $M_{2}$ is
rigid.

\begin{prop}\label{claimone} The following elementarity relations hold.
\begin{enumerate}
\item\label{firstrel} $N_{1} \prec N_{2}$.
\item\label{secondrel} $M_{1} \prec M_{2}$.
\item\label{thirdrel} $M_{1} \prec_{\infty, \aleph_{0}} N_{1}$
\end{enumerate}
\end{prop}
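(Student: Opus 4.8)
The plan is to treat the three clauses in the order (1), (2), (3), deriving (2) formally from (1) and concentrating the real work in (1) and (3). In both of those the engine is a back-and-forth system, fed by the independence requirement of Definition \ref{gisdef} for adjoining points of $Q$, and by the genericity clause \ref{goodc} for adjoining points of $P$.

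For (1) I would first note that the only element of $N_2$ absent from $N_1$ is $c_{2^{\aleph_0}}$, and that a first-order $\tau$-formula mentions only finitely many of the $R_n$; hence it suffices to prove $N_1\restriction\tau_m\prec N_2\restriction\tau_m$ for every $m$, since a formula of $\tau_m$ is evaluated identically in a structure and in its $\tau_m$-reduct. Fix $m$ and check the Tarski--Vaught criterion for the reduct: given $\bar a$ from $N_1$ and a witness in $N_2$, the only case needing attention is the witness $c_{2^{\aleph_0}}$. The key observation is that in the reduct to $\tau_m$ the element $c_{2^{\aleph_0}}$, which is $R_n$-joined to $b_i$ exactly when $i\in A_{2^{\aleph_0},n}$ for \emph{every} $n$, is indistinguishable from any $c_\alpha$ whose residue $\alpha\bmod\omega$ is at least $m$ and whose traces $A_{\alpha,n}\cap I$ (for $n\le m$) agree with those of $A_{2^{\aleph_0},n}$ on the finite index set $I$ occurring in $\bar a$. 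Clause \ref{goodc} produces $2^{\aleph_0}$ many such $\alpha$, so one may take $c_\alpha\in N_1$ distinct from the parameters. I would then exhibit a back-and-forth family $\mathcal I_m$ of finite partial $\tau_m$-isomorphisms of $N_2$ that send $b$'s to $b$'s and $c$'s to $c$'s, match residues below $m$, and treat every $c$ of residue $\ge m$ (together with $c_{2^{\aleph_0}}$) as a single ``full'' type; the forth-step for a new point of $Q$ is solved by independence and that for a new point of $P$ by clause \ref{goodc}, with the back-steps symmetric. Since the map fixing $\bar a$ and sending $c_{2^{\aleph_0}}\mapsto c_\alpha$ lies in $\mathcal I_m$, the two tuples have the same $\tau_m$-type and the desired witness $c_\alpha\in N_1$ is produced.

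Clause (2) I would obtain without a fresh argument. Because $\bar A\in\cX$, the structures $N_1,N_2$ are definable from a parameter of $\cX$, so $N_1,N_2\in\cX$; as $\tau$ is countable and $\cX\prec(H((2^{\aleph_0})^+),\in)$, the standard fact that an elementary submodel meets a definable structure in an elementary substructure gives $M_1=N_1\restriction(\cX\cap N_1)\prec N_1$ and $M_2=N_2\restriction(\cX\cap N_2)\prec N_2$ (here $\cX\cap N_2$ contains $c_{2^{\aleph_0}}$, since $2^{\aleph_0}\in\cX$). Combining $M_1\prec N_1\prec N_2$ (using (1)) with $M_2\prec N_2$ and the inclusions $M_1\subseteq M_2\subseteq N_2$ then yields $M_1\prec M_2$.

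For (3) I would build a back-and-forth family $I$ of finite partial $\tau$-isomorphisms $p\colon N_1\to M_1$ in the \emph{full} vocabulary that send $b$'s to $b$'s, $c$'s to $c$'s, and preserve the residue $\alpha\bmod\omega$ of each $c_\alpha$ in the domain; the role of the residue condition is that every $c_\alpha$ of $N_1$ is $R_n$-joined to $Q$ only for the finitely many $n\le\alpha\bmod\omega$, so each $c$-pattern is a finite object to which clause \ref{goodc} applies. A routine induction on $\cL_{\infty,\aleph_0}$-formulas shows that any such $p$ preserves satisfaction between $N_1$ and $M_1$, and since all finite partial identities on $M_1$ belong to $I$, this gives $M_1\prec_{\infty,\aleph_0}N_1$. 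The steps adding or removing points of $Q$, and the \emph{back}-step removing a point of $P$ into the rich structure $N_1$, are handled exactly as before by independence and clause \ref{goodc}. The one genuinely new step is the \emph{forth}-step sending a point $c_\delta$ of $N_1$ into the \emph{countable} model $M_1$: here the witnessing ordinal must be found inside $\cX$. I would arrange this by observing that the existence of $2^{\aleph_0}$ many $\delta'<2^{\aleph_0}$ with the prescribed finite residue and finite trace is an assertion whose parameters are $\bar A$ together with finitely many natural numbers and finite subsets of $\omega$, all lying in $\cX$; elementarity of $\cX$ reflects this assertion, yielding such a $\delta'\in\cX$, that is $c_{\delta'}\in M_1$, and infinitely many of them so that the points already used can be avoided. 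I expect this reflection step to be the main obstacle, since it is exactly where the countability of $M_1$ must be reconciled with the genericity demanded by the back-and-forth. Note finally that the argument genuinely needs $N_1$ rather than $N_2$: the element $c_{2^{\aleph_0}}$, being joined to $Q$ through infinitely many $R_n$, would make the analogous $\cL_{\infty,\aleph_0}$ back-and-forth impossible.
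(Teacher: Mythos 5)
Your proposal is correct and follows essentially the same route as the paper's proof: part (\ref{firstrel}) by reduction to the reducts $\tau_{m}$ and a back-and-forth argument powered by independence (for adjoining $Q$-points) and clause (\ref{goodc}) of Definition \ref{gisdef} (for $P$-points), part (\ref{secondrel}) as a formal consequence of part (\ref{firstrel}) plus the elementarity of $\cX$, and part (\ref{thirdrel}) by a residue-preserving back-and-forth system in the full vocabulary, with elementarity of $\cX$ supplying the witnesses inside $M_{1}$. The only cosmetic difference is in (\ref{firstrel}), where you route the argument through Tarski--Vaught and a partial-automorphism system of $N_{2}$ rather than playing the game directly between $N_{1}$ and $N_{2}$ below the pairing of $c_{m}$ with $c_{2^{\aleph_{0}}}$; your explicit reflection step in (\ref{thirdrel}) is precisely what the paper compresses into ``this follows just as in the proof of part (\ref{firstrel}).''
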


\begin{proof}
For part (\ref{firstrel}), it suffices to prove that for every $m \in \omega$, $N_{1} \restrict \tau_{m} \prec N_{2} \restrict \tau_{m}$.
In fact we will show that $N_{1} \restrict \tau_{m} \prec_{\infty, \aleph_{0}} N_{2} \restrict \tau_{m}$ by showing that player  $II$ has a winning strategy in the back-forth-game of length $\omega$ between these two structures,
below the play pairing $c_{m}$ with $c_{2^{\aleph_{0}}}$. By Karp's Theorem (Corollary 3.5.3 of \cite{Hodges}, for instance), this suffices.
To show that $II$ has such a strategy, let $F$ be the set of relation-preserving maps $f$ from \[\{ b_{i} : i < \omega\} \cup \{ c_{\alpha} : \alpha < 2^{\aleph_{0}}\}\]
to \[\{ b_{i} : i < \omega\} \cup \{ c_{\alpha} : \alpha \leq 2^{\aleph_{0}}\}\] with $f(c_{m}) = c_{2^{\aleph_{0}}}$ and $\alpha = \beta \mod \omega$ whenever $\alpha \neq k$ and $f(c_{\alpha}) = c_{\beta}$. It suffices then to show that for each $f$, each $i \in \omega$ and each $\alpha < 2^{\aleph_{0}}$, there exists an $f' \in F$ containing $f$ with $b_{i}$ and $c_{\alpha}$ in both the domain and the range of $f'$.
The independence of $\cA$ implies that elements of the form $b_{i}$ can be added. Item (\ref{goodc}) of Definition \ref{gisdef} implies that elements of the form $c_{\alpha}$ can be dealt with.

Part (\ref{secondrel}) follows from part (\ref{firstrel}) and the elementarity of $\cX$ in $H((2^{\aleph_{0}})^{+})$.




Part (\ref{thirdrel}) follows from the fact that player  $II$ has a winning strategy in the back-and-forth game of length $\omega$ between $M_{1}$ and $N_{1}$ (again using Karp's Theorem). For this, let $F$ be the set of relation-preserving maps $f$ from
\[\{ b_{i} : i < \omega\} \cup \{ c_{\alpha} : \alpha  \in 2^{\aleph_{0}} \cap \cX\}\]
to
\[\{ b_{i} : i < \omega\} \cup \{ c_{\alpha} : \alpha < 2^{\aleph_{0}}\}\] with $\alpha = \beta \mod \omega$ whenever $f(c_{\alpha}) = c_{\beta}$. It suffices then to show that $II$ has a response to any move by player $I$ meeting this condition. This follows just as in the proof of part (\ref{firstrel}).
\end{proof}

\begin{prop}\label{claim2} Every substructure of $N_{2}$ containing $M_{2}$ is rigid.
\end{prop}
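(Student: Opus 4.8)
The plan is to fix an arbitrary substructure $M$ with $M_{2} \subseteq M \subseteq N_{2}$ and an arbitrary automorphism $\sigma$ of $M$, and to prove $\sigma = \mathrm{id}$ in three stages: first pin down the top element $c_{2^{\aleph_{0}}}$, then every $b_{i}$, and finally every $c_{\alpha}$ in $M$. Throughout I use that $\sigma$, preserving the unary predicates, maps $Q^{M} = \{b_{i}\}$ into itself and $P^{M}$ into itself. Note also that $M$ contains every $b_{i}$ and the element $c_{2^{\aleph_{0}}}$, since these all already lie in $M_{2}$ (here $2^{\aleph_{0}} \in \cX$ by elementarity, so $c_{2^{\aleph_{0}}} \in M_{2}$); this is exactly where the hypothesis $M_{2} \subseteq M$ enters.

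First I would isolate $c_{2^{\aleph_{0}}}$ as the unique element of $P^{M}$ satisfying ``for every $n$ there is a $b$ with $(b,c) \in R_{n}$.'' The top element satisfies this because each $A_{2^{\aleph_{0}},n}$ is infinite (being a member of an independent family), so some $b_{i}$ witnesses $R_{n}$ for each $n$. Conversely, if $\alpha < 2^{\aleph_{0}}$ and $\alpha = m \bmod \omega$, then by the definition of $R_{n}^{N_{2}}$ no pair $(b_{i},c_{\alpha})$ lies in $R_{n}$ once $n > m$, so $c_{\alpha}$ fails the property. Since $\sigma$ preserves every $R_{n}$ and permutes $P^{M}$, the set of elements with this property is $\sigma$-invariant, and being a singleton it is fixed: $\sigma(c_{2^{\aleph_{0}}}) = c_{2^{\aleph_{0}}}$.

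Next, because $\sigma$ fixes $c_{2^{\aleph_{0}}}$ and preserves each $R_{n}$, each $b_{i}$ and its image $\sigma(b_{i})$ have the same ``trace'' $\{ n : (b_{i},c_{2^{\aleph_{0}}}) \in R_{n}\} = \{ n : i \in A_{2^{\aleph_{0}},n}\}$. Item (\ref{dshake}) of Definition \ref{gisdef} says precisely that these traces separate points: for $i \neq j$ some $A_{2^{\aleph_{0}},n}$ contains exactly one of $i,j$. Hence the trace determines the index, and $\sigma(b_{i}) = b_{i}$ for all $i$. Finally, take any $c_{\alpha} \in M$ with $\alpha < 2^{\aleph_{0}}$ (the top is already handled, and injectivity of $\sigma$ rules out $\sigma(c_{\alpha}) = c_{2^{\aleph_{0}}}$), and write $\sigma(c_{\alpha}) = c_{\beta}$. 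Since $\sigma$ now fixes every $b_{i}$ and preserves each $R_{n}$, the elements $c_{\alpha}$ and $c_{\beta}$ have identical $R_{n}$-columns for every $n$; reading off the $R_{0}$-column, where the side condition $0 \leq m$ holds automatically, gives $A_{\alpha,0} = A_{\beta,0}$, and injectivity of $(\alpha,n) \mapsto A_{\alpha,n}$ (clause (2) of Definition \ref{gisdef}) forces $\alpha = \beta$. Thus $\sigma$ is the identity and $M$ is rigid.

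There is no deep obstacle here; the work is entirely in matching each stage to the right clause of Definition \ref{gisdef} (infiniteness of the $A$'s distinguishes the top, item (\ref{dshake}) rigidifies the $Q$-side, injectivity rigidifies the $P$-side). The one point needing mild care is that the argument must apply uniformly to whatever extra $c_{\alpha}$'s the enlarged $M$ may contain beyond $M_{2}$; but since every step invokes only properties of $\bar{A}$ rather than the size of $P^{M}$, this is automatic, and the same reasoning covers all of $P^{M}$ at once.
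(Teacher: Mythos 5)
Your proof is correct and follows essentially the same three-stage argument as the paper: fix $c_{2^{\aleph_{0}}}$ as the unique element of $P^{M}$ related by every $R_{n}$ to some element of $Q^{M}$, use clause (\ref{dshake}) of Definition \ref{gisdef} to fix each $b_{i}$, then use the $R_{0}$-columns together with the injectivity of $(\alpha,n) \mapsto A_{\alpha,n}$ to fix each $c_{\alpha}$. The extra details you supply (why the top element alone has the defining property, and why membership of the $b_{i}$'s and $c_{2^{\aleph_{0}}}$ in $M$ follows from $M_{2} \subseteq M$) are correct and only make explicit what the paper leaves implicit.
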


\begin{proof} Let $M$ a substructure of $N_{2}$ containing $M_{2}$ and let $\pi$ be an automorphism of $M$. Since $c_{2^{\aleph_{0}}}$ is the only $c \in P^{M}$ such that for each $n \in \omega$ there is a $b \in Q^{M}$ wth $(b,c) \in R^{M}_{n}$, $\pi(c_{2^{\aleph_{0}}}) = c_{2^{\aleph_{0}}}$. Now $\pi[Q^{M_{2}}] = Q^{M_{2}}$, and if $i < j < \omega$ then, by condition (\ref{dshake}) of Definition \ref{gisdef}, for some $n \in \omega$, \[(b_{i}, c_{2^{\aleph_{0}}}) \in R^{M}_{n} \leftrightarrow (b_{j}, c_{2^{\aleph_{0}}}) \not\in R^{M}_{n},\] so $\pi$ is the identity function on $Q^{M_{2}}$.

Finally, for each $c_{\alpha}$ in $P^{M}$, and for each $i \in \omega$, $(b_{i}, c_{\alpha}) \in R^{M}_{0}$ if and only if $i \in A_{\alpha, 0}$.
Since $\pi$ fixes $\{ b_{i} : i \in \omega\}$ pointwise, and the sets $A_{\alpha, 0}$ $(\alpha < 2^{\aleph_{0}})$ are distinct, $\pi$ must be the identity function on $P^{M}$.
\end{proof}

Say that a set $Z \subseteq (2^{\aleph_{0}} + 1)$ is \emph{robust} if for each $k \in \omega$ there are infinitely many $\alpha \in Z$ with $\alpha = k \mod \omega$. For each $Z \subseteq (2^{\aleph_{0}} + 1)$, let $M_{Z}$ be the restriction of $M_{1}$ to $\{ b_{i} : i \in \omega \} \cup
\{ c_{\alpha} : \alpha \in Z\}$. The arguments above show the following facts, which imply that there are, up to isomorphism, continuum many countable models elementarily equivalent to $M_{1}$.

\begin{itemize}
\item If $Z \subseteq (2^{\alpha_{0}} + 1)$ is robust, then $M_{Z}$ is elementarily equivalent to $M_{1}$.
\item If $Z_{1}, Z_{2}$ are robust subsets of $(2^{\aleph_{0}} + 1)$ with $2^{\aleph_{0}} \in Z_{1} \cap Z_{2}$, then either $Z_{1} = Z_{2}$ or
$M_{Z_{1}}$ and $M_{Z_{2}}$ are nonisomorphic.
\end{itemize}

One can also modify the example above to make $N_{1}$ and $N_{2}$ arbitrarily large, simply by taking as many disjoint copies of $N_{1}$ or $N_{2}$ as desired.

\begin{ques}
  Can an extendible countable model have at least one but only countably many rigid elementary extensions?
\end{ques}

\end{document}